\DeclareSymbolFont{cyrillic}{T2A}{cmr}{m}{n}
\def\makecyrsymbol#1#2{%
  \begingroup\edef\temp{\endgroup
    \noexpand\DeclareMathSymbol{\noexpand#1}
    {\noexpand\mathalpha}{cyrillic}%
    {\expandafter\expandafter\expandafter
     \calccyr\expandafter\meaning\csname T2A\string#2\endcsname\end}}%
  \temp}
\def\expandafter\calccyr\string\char#1\end{#1}
\newtheoremstyle{dotless}{}{}{\itshape}{}{\bfseries}{}{ }{}
\newtheorem{thm}{Theorem}
\theoremstyle{dotless}
\newcommand{\rank}{\mathop{\mathrm{rank}}}
\newcommand{\Q}{\mathbb{Q}}
\newcommand{\Nm}{\mathrm{Nm}}
\newcommand{\F}{\mathbb{F}}
\newcommand{\Gal}{\mathop{\mathrm{Gal}}}
\newcommand{\pfrak}{\mathfrak{p}}
\newcommand{\inj}{\hookrightarrow}
\newcommand{\Ind}{\mathrm{Ind}}
\newcommand{\eps}{\varepsilon}
\newcommand{\afrak}{\mathfrak{a}}
\newcommand{\ofrak}{\mathfrak{o}}
\newcommand{\G}{\mathbb{G}}
\renewcommand{\o}{\ofrak}
\newcommand{\Cl}{\mathrm{Cl}}
\newcommand{\Qbar}{\overline{\Q}}
\newcommand{\Jac}{\mathrm{Jac}}
\newcommand{\aff}{\mathrm{aff.}}
\newcommand{\Pic}{\mathrm{Pic}}
\let\phi\varphi
\let\@@pmod\pmod
\DeclareRobustCommand{\pmod}{\@ifstar\@pmods\@@pmod}
\def\@pmods#1{\mkern4mu({\operator@font mod}\mkern 6mu#1)}
 \DeclareFontFamily{U}{wncy}{}
    \DeclareFontShape{U}{wncy}{m}{n}{<->wncyr10}{}
    \DeclareSymbolFont{mcy}{U}{wncy}{m}{n}
    \DeclareMathSymbol{\Sha}{\mathord}{mcy}{"58}
\title{\vspace{-0.15in}Note on a theorem of Professor X.}
\author{Levent Alp\"{o}ge}
\date{}
\begin{document}
\maketitle
\begin{abstract}
Between his arrival in Frankfurt in $1922$ and and his proof of his famous finiteness theorem for integral points in $1929$, Siegel had no publications. He did, however, write a letter to Mordell in $1926$ in which he explained a proof of the finiteness of integral points on hyperelliptic curves. Recognizing the importance of this argument (and Siegel's views on publication), Mordell sent the relevant extract to be published under the pseudonym "X".

The purpose of this note is to explain how to optimize Siegel's $1926$ technique to obtain the following bound. Let $K$ be a number field, $S$ a finite set of places of $K$, and $f\in \o_{K,S}[t]$ monic of degree $d\geq 5$ with discriminant $\Delta_f\in \o_{K,S}^\times$. Then: $$\#|\{(x,y) : x,y\in \o_{K,S}, y^2 = f(x)\}|\leq  2^{\rank{\Jac(C_f)(K)}}\cdot O(1)^{d^3\cdot ([K:\Q] + \#|S|)}.$$

This improves bounds of Evertse-Silverman and Bombieri-Gubler from $1986$ and $2006$, respectively.

The main point underlying our improvement is that, informally speaking, we insist on "executing the descents in the presence of only one root (and not three) until the last possible moment".
\end{abstract}

\section{Introduction.}
The technique introduced in Siegel's $1926$ letter to Mordell \cite{siegel-x} to prove the finiteness of integral points on hyperelliptic curves\footnote{By this common and incorrect abbreviation we really mean integral solutions of $y^2 = f(x)$. Siegel's $1926$ proof does not control integral solutions of e.g.\ $y^3 + x\cdot y = x^4$, or, said another way, integral points with respect to an effective divisor containing no nonzero divisor symmetric under the hyperelliptic involution. Baker effectivized Siegel's $1926$ argument and thus gave an effective finiteness proof in the $y^2 = f(x)$ cases, but he did not effectivize Siegel's $1929$ finiteness theorem in the case of hyperelliptic curves.} can be summarized as: $2$-descent on the curve, then $3$-descent on $\G_m$. From this summary the intuitive guess for the bound that the argument "should" produce is of course something of shape $2^{\rank}\cdot 3^{\text{$\#$ of prime factors of the discriminant}}$. However a consultation of the literature yields worse estimates.\footnote{To our knowledge the best bounds in the literature are the $2006$ estimate of Bombieri-Gubler \cite[Theorem $5.3.5$]{bombieri-gubler} and the $1986$ estimate of Evertse-Silverman \cite{evertse-silverman}. In both bounds the "expected" $2^{\rank}$ factor is replaced by a power of the size of the $2$-part of the class group of an extension gotten by adjoining three Weierstrass points.}

In this note we rectify the situation.

\begin{thm}\label{the main theorem}
Let $K/\Q$ be a number field. Let $S$ be a finite set of places of $K$. Let $f\in \o_{K,S}[t]$ be monic of degree $d\geq 5$ with discriminant $\Delta_f\in \o_{K,S}^\times$. Let $\mathcal{C}_f^\aff : y^2 = f(x)$ be the Weierstrass model of the hyperelliptic curve $C_f$ corresponding to $f$. Then: $$\#|\mathcal{C}_f^\aff(\o_{K,S})|\leq 2^{\rank{\Jac(C_f)(K)}}\cdot O(1)^{d^3\cdot ([K:\Q] + \#|S|)}.$$

Also, for all $K$-irreducible $g_i\in \o_{K,S}[t]$ with $\sum_i \deg{g_i}\geq 3$ and $\prod_i g_i\big\vert f$, $$\#|\mathcal{C}_f^\aff(\o_{K,S})|\leq \left(\prod_i \#|\Cl(\o_{K_{g_i},S})[2]|\right)\cdot O(1)^{d^3\cdot ([K:\Q] + \#|S|)},$$ where $K_{g_i} := K[t]/(g_i)$ and $\o_{K_{g_i},S} := \o_{K_{g_i}}\otimes_{\o_K} \o_{K,S}$.
\end{thm}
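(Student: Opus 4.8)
The plan is to run a single $2$-descent \emph{one irreducible factor at a time}, which produces exactly the $\prod_i \#|\Cl(\o_{K_{g_i},S})[2]|$ term, and to postpone passing to a field containing three roots until the very last step, where only an $S$-unit equation (and hence no class group at all) intervenes. Fix a point $(x,y)\in\mathcal{C}_f^\aff(\o_{K,S})$ with $y\neq 0$; the at most $d$ points with $y=0$ are swallowed by the error term. For each chosen factor $g_i$ let $\theta_i$ denote the image of $t$ in $K_{g_i}$, so that $x-\theta_i\in\o_{K_{g_i},S}$ and $\Nm_{K_{g_i}/K}(x-\theta_i)=g_i(x)$.

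First I would show that $(x-\theta_i)$ is a \emph{square} ideal of $\o_{K_{g_i},S}$. Since $\Delta_f\in\o_{K,S}^\times$, every difference of two roots of $f$ is an $S$-unit; hence the $g_i$, together with the complementary factor $f/\prod_i g_i$, are pairwise coprime modulo each prime outside $S$, and every reduction $\overline{g_i}$ is separable. Consequently, for a prime $\Pfrak$ of $\o_{K_{g_i},S}$ dividing $x-\theta_i$ and lying over $\pfrak=\Pfrak\cap\o_K$, the congruence $x\equiv\theta_i\pmod{\Pfrak}$ singles out $\Pfrak$ as the unique prime over $\pfrak$ dividing $x-\theta_i$, and it is unramified of residue degree $1$; the norm formula then gives $\ord_\Pfrak(x-\theta_i)=\ord_\pfrak(g_i(x))=\ord_\pfrak(y^2)=2\,\ord_\pfrak(y)$. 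Thus $(x-\theta_i)=\afrak_i^2$ for an ideal $\afrak_i$ with $[\afrak_i]\in\Cl(\o_{K_{g_i},S})[2]$.

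Next I would separate the class-group and unit contributions. The assignment $(x,y)\mapsto([\afrak_i])_i$ sends $\mathcal{C}_f^\aff(\o_{K,S})$ into $\prod_i\Cl(\o_{K_{g_i},S})[2]$, of size $\prod_i\#|\Cl(\o_{K_{g_i},S})[2]|$. On a fixed fiber one may write $x-\theta_i=u_i\beta_i^2$ with $\beta_i$ generating $\afrak_i$ and $u_i\in\o_{K_{g_i},S}^\times$ well defined modulo squares. By the $S$-unit theorem $\#|\o_{K_{g_i},S}^\times/(\o_{K_{g_i},S}^\times)^2|\leq 2^{O(\deg g_i\cdot([K:\Q]+\#|S|))}$, so fixing the tuple $(u_i)$ costs only $\prod_i 2^{O(\deg g_i\cdot([K:\Q]+\#|S|))}=O(1)^{d([K:\Q]+\#|S|)}$, which is absorbed into the error term. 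It remains to bound, by $O(1)^{d^3([K:\Q]+\#|S|)}$, the number of $x$ compatible with fixed data $(u_i)$. Here I would finally use three roots: as $\sum_i\deg g_i\geq 3$, choose distinct roots $\theta_1,\theta_2,\theta_3$ among the roots of the $g_i$ (their attached units being the fixed $u_i$, conjugated within each $K_{g_i}$ as needed), set $M=K(\theta_1,\theta_2,\theta_3)$ and $M'=M(\sqrt{u_1},\sqrt{u_2},\sqrt{u_3})$, and put $\gamma_j=\sqrt{u_j}\,\beta_j\in\o_{M',S}$, so $\gamma_j^2=x-\theta_j$ and $\gamma_i^2-\gamma_j^2=\theta_j-\theta_i\in\o_{M',S}^\times$. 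Since a product of $S$-integers equal to an $S$-unit has $S$-unit factors, each $\gamma_i\pm\gamma_j$ is an $S$-unit, and the identity $(\gamma_1-\gamma_2)+(\gamma_2-\gamma_3)=\gamma_1-\gamma_3$ yields a unit equation $a+b=1$ with $a,b\in\o_{M',S}^\times$. The quantitative theorem on $S$-unit equations bounds the number of such $(a,b)$ by $O(1)^{\#|S_{M'}|}$, where $S_{M'}$ is the set of places of $M'$ over $S$ and the archimedean places; since $[M':K]\leq 8d^3$ one has $\#|S_{M'}|\leq [M':K]\cdot(\#|S|+[K:\Q])\leq 8d^3([K:\Q]+\#|S|)$, the desired exponent. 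Finally, from a solution $a$ the relations $\gamma_i^2-\gamma_j^2=\theta_j-\theta_i$ determine $(\gamma_1-\gamma_3)^2$, hence $\gamma_1^2$ and $x=\theta_1+\gamma_1^2$, uniquely (with $y$ pinned down up to sign), so each $(a,b)$ gives $O(1)$ points. Multiplying the three bounds yields the statement.

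The hard part is conceptual rather than computational: one must resist passing to a field containing three roots \emph{during} the descent, since doing so would replace each $\#|\Cl(\o_{K_{g_i},S})[2]|$ by the much larger $2$-torsion of the class group of a three-root field — precisely the loss incurred in the Evertse--Silverman and Bombieri--Gubler estimates. The content of the argument is thus the bookkeeping that extracts the class-group information one root (one $g_i$) at a time, so that at the three-root stage only the comparatively cheap $S$-unit group of $M'$ — whose rank is governed by $\#|S_{M'}|=O(d^3([K:\Q]+\#|S|))$ — must be paid for. A secondary technical point, where the hypothesis $\Delta_f\in\o_{K,S}^\times$ is indispensable, is the verification in the second step that $(x-\theta_i)$ is honestly a square ideal and not merely square up to some bounded $S$-torsion ambiguity.
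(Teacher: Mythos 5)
Your architecture for the second bound matches the paper's: a $2$-descent run one irreducible factor $g_i$ at a time, with the passage to a three-root field deferred to the very end. Your endgame, though, genuinely diverges from the paper's, and it works: where the paper follows Siegel in performing a $3$-descent on $\G_m$ inside $L(\sqrt{\lambda_k},\sqrt{\lambda_\ell})$ and then invoking Evertse's bound for cubic Thue equations $\frac{v_{12,\pm}}{v_{13,-}}X^3\mp\frac{v_{23,\pm}}{v_{13,-}}Y^3=1$, you pass directly to the two-term $S$-unit equation $\frac{\gamma_1-\gamma_2}{\gamma_1-\gamma_3}+\frac{\gamma_2-\gamma_3}{\gamma_1-\gamma_3}=1$ and apply Evertse's unit-equation count; your recovery identity $(\gamma_1-\gamma_3)^2=\frac{\theta_2-\theta_1}{a}-\frac{\theta_3-\theta_2}{b}$ is correct and even pins down $x$ from $(a,b)$ and the chosen roots alone, up to $O(1)$. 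This trades the paper's $3$-descent bookkeeping (and the recovery of $\zeta_{13,-}$ from a sixth power) for a shorter argument with the same exponent $O(d^3([K:\Q]+\#|S|))$; the paper's route is the historically faithful one, yours is the more economical one. However, there is a local error you must repair: you write ``$x-\theta_i=u_i\beta_i^2$ with $\beta_i$ generating $\afrak_i$'' on a fixed fiber of the class map, but the fiber only fixes the \emph{class} $[\afrak_i]\in\Cl(\o_{K_{g_i},S})[2]$, which is in general nontrivial, so no generator $\beta_i$ exists in $\o_{K_{g_i},S}$ and your $\gamma_j$ and their $S$-unit properties do not yet make sense. The paper's fix is exactly its Chebotarev step: choose a prime representative $\pfrak_i$ of each $2$-torsion class of norm $\ll|\Delta_{K_{g_i}}|^{O(1)}$ and work in $\o_{K_{g_i},S\cup\{\pfrak_i\}}$, where $\afrak_i$ becomes principal; the enlarged unit group still costs only $2^{O(\deg g_i\cdot([K:\Q]+\#|S|))}$. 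This is repairable, but as written the step fails precisely where the class-group factor is supposed to do its work.

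The more serious gap is that you never prove the first assertion of the theorem. Your count yields only $\left(\prod_i \#|\Cl(\o_{K_{g_i},S})[2]|\right)\cdot O(1)^{d^3([K:\Q]+\#|S|)}$, and the bound $2^{\rank{\Jac(C_f)(K)}}\cdot O(1)^{d^3([K:\Q]+\#|S|)}$ does \emph{not} follow from it: with $g=f$ the factor $\#|\Cl(\o_{K_f,S})[2]|$ can vastly exceed $2^{\rank{\Jac(C_f)(K)}}$, and replacing it by the rank is the entire advertised improvement over Evertse--Silverman and Bombieri--Gubler. The paper needs a further cohomological argument here: writing $J_f:=\Jac(C_f)$, one has $\Ind_K^{K_f}\F_2\simeq\F_2\oplus J_f[2]$ as Galois modules (since $\Delta_f\in\o_{K,S}^\times$ forces the Weierstrass points to give this induced structure), whence by Shapiro and Kummer $H^1(K,J_f[2])\simeq (K_f^\times/2)_{\Nm=\square}$; the class of $x-\rho$ for an integral point lies in the image $G'$ of $\mathcal{C}_f^\aff(\o_{K,S})-W$ inside the image $G$ of $J_f(K)/2\inj H^1(K,J_f[2])$, and the assignment $(x,y)\mapsto(\pfrak_i)_i$ factors through $G'$. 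Hence only $\#|G'|\leq\#|G|=\#|J_f[2](K)|\cdot 2^{\rank{J_f(K)}}\leq 2^{\rank{J_f(K)}}\cdot O(1)^d$ class-tuples are ever charged, not all of $\Cl(\o_{K_f,S})[2]$. Without this identification of the descent classes with the image of the Mordell--Weil $2$-descent map, your proposal establishes the second displayed bound only.
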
\noindent
We include the second statement in case e.g.\ the product of the linear and quadratic irreducible factors of $f$ is of degree at least $3$.

In the elliptic curve case an even stronger bound is available because one can execute the whole descent over $K$ (following Mordell) \cite{paper-with-wei}\footnote{See also Chapter $2$ of \cite{my-thesis}, which is based on the same work.}. In the superelliptic ($y^m = f(x), m > 2$) case as usual one does not need to execute a $3$-descent on $\G_m$ so the bound also improves.

Let us explain how the argument goes. First, the $2$-descent. \`{A} la Fermat, given $y^2 = f(x)$, adjoin a root $\rho$ and conclude that $x - \rho$ is almost a square, say $x-\rho = \alpha\cdot \beta^2$ with $\alpha,\beta\in K[\rho] := K[t]/(f)$.

Now adjoin three roots. Let $L/K$ be an extension containing three roots, say $\rho_1, \rho_2, \rho_3$. Then, for all $K$-maps $\sigma: K[\rho]\to L$, we find $x - \sigma(\rho) = \sigma(\alpha)\cdot \sigma(\beta)^2$. Thus (obvious notation) $\rho_j - \rho_i = \alpha_i\cdot \beta_i^2 - \alpha_j\cdot \beta_j^2$.

Now pass to $L' := L(\sqrt{\alpha_1}, \sqrt{\alpha_2}, \sqrt{\alpha_3})$. We obtain the six elements $\gamma_{ij,\pm} := \beta_i\sqrt{\alpha_i}\pm \beta_j\sqrt{\alpha_j}$, all divisors of $\rho_j - \rho_i$. By hypothesis they are therefore all $S'$-units, with $S'$ the set of places of $L'$ above a place in $S$.

Now for the $3$-descent. We may then write $\gamma_{ij,\pm} = \delta_{ij,\pm}\cdot \eps_{ij,\pm}^3$ with $\delta_{ij,\pm}$ representatives for $\o_{L',S'}^\times/3$. The relation $\gamma_{12,+} - \gamma_{23,+} = \gamma_{13,-}$ becomes the cubic Thue equation $\frac{\delta_{12,+}}{\delta_{13,-}}\cdot \left(\frac{\eps_{12,+}}{\eps_{13,-}}\right)^3 - \frac{\delta_{23,+}}{\delta_{13,-}}\cdot \left(\frac{\eps_{23,+}}{\eps_{13,-}}\right)^3 = 1$.

We conclude with a bound of Evertse.

As a last remark: note that $2^{\rank{\Jac\,{(C_f)}(K)}}$ is $\ll 1$ on average (ordering as usual by height) \cite{bhargava-gross, shankar-wang}. This statement is why we tried for the stronger bound \cite{paper-with-wei} in the elliptic curve case, after which we questioned why a similar bound did not exist in the hyperelliptic curve case.

\section{Acknowledgments.}

This note is based on Chapter $4$ of the author's Ph.D.\ thesis at Princeton University. I would like to thank both my advisor Manjul Bhargava and Peter Sarnak for their patience and encouragement. I would also like to thank Jacob Tsimerman and Nina Zubrilina for informative discussions. Finally I thank the National Science Foundation (via their grant DMS-$2002109$), Columbia University, and the Society of Fellows for their support during the pandemic.

\section{Proof of Theorem \ref{the main theorem}.}

\begin{proof}[Proof of Theorem \ref{the main theorem}]
Let $g\in \o_{K,S}[t]$ with $g\vert f$ be a monic divisor of $f$ with $\deg{g}\geq 3$ (for the first part of the theorem we will take $g = f$). Write $f =: g h$ with $h\in \o_{K,S}[t]$. Let $L/K$ be an extension of minimal degree containing at least three roots of $g$.

Factorize $g =: \prod_i g_i$ into $K$-irreducible factors $g_i\in \o_{K,S}[t]$. Note that $(g_i, g_j) = (1)$ for each $i\neq j$ because $\Delta_f\in \o_{K,S}^\times$. Thus $K_g\simeq \bigoplus_i K_{g_i}$, with $K_{g_i} := K[t]/(g_i)$ and $K_g := K[t]/(g)$. We will repeatedly write $\o_{K_{g_i}, S}$ etc.\ for the evident localizations, e.g.\ $\o_{K_{g_i},S} := \o_{K_{g_i}}\otimes_{\o_K} \o_{K,S}$. Write $\rho^{(i)}$ for the image of $t$ in $K_{g_i} = K[t]/(g_i)$.

By the Chebotarev density theorem (and its explicit error term) applied to the Hilbert class field of $K_{g_i}$, each ideal class of $K_{g_i}$ contains a prime of norm $\ll |\Delta_{K_{g_i}}|^{O(1)}$. (We will only use existence.) Let then $P^{(i)}$ be a minimal set of prime representatives of $\Cl(\o_{K_{g_i},S})[2]$ in $K_{g_i}$ of norm $\ll |\Delta_{K_{g_i}}|^{O(1)}$.

Now let us begin the argument. For $(x,y)\in \mathcal{C}_f^\aff(\o_{K,S})$, $(y)^2 = (x-\rho^{(i)})\cdot \left(\frac{g_i(x)}{x-\rho^{(i)}}\right)\cdot \prod_{j\neq i} (g_j(x))\cdot (h(x))$ as ideals in $\o_{K_{g_i},S}$. Without loss of generality $f(x)\neq 0$. Since $\Delta_f\in \o_{K,S}^\times$, it follows that there is an ideal $\afrak_i\subseteq \o_{K_{g_i},S}$ with $(x-\rho^{(i)}) = \afrak_i^2$. Thus there is a $\pfrak_i\in P^{(i)}$ with $\pfrak_i\equiv \afrak_i$ modulo principal ideals of $K_{g_i}$. Hence $\afrak_i$ is principal in $\o_{K_{g_i},S\cup \{\pfrak_i\}}$ (obvious meaning).

Let $\alpha_i\in \o_{K_{g_i},S\cup \{\pfrak_i\}}$ be such that $\afrak_i = (\alpha_i)$ as ideals of $\o_{K_{g_i},S\cup \{\pfrak_i\}}$. Thus $x - \rho^{(i)} = \alpha_i^2\cdot (\in \o_{K_{g_i}, S\cup \{\pfrak_i\}}^\times)$, where $(\in \o_{K_{g_i}, S\cup \{\pfrak_i\}}^\times)$ denotes an element of the $(S\cup \{\pfrak_i\})$-units of $K_{g_i}$.

Let $U^{(i)}$ be a minimal set of representatives of $\o_{K_{g_i},S\cup \{\pfrak_i\}}^\times/2$ (that is, modulo squares). It follows that there are $\gamma_i\in U^{(i)}$ and $u^{(i)}\in \o_{K_{g_i},S\cup \{\pfrak_i\}}^\times$ such that $x - \rho^{(i)} = \gamma_i\cdot (\alpha_i\cdot u^{(i)})^2$. Let $\eta_i := \alpha_i\cdot u^{(i)}$. Thus $x - \rho^{(i)} = \gamma_i\cdot \eta_i^2$.

Therefore we find that, for each $K$-embedding $\sigma: K_{g_i}\inj L$, we have $x - \sigma(\rho^{(i)}) = \sigma(\gamma_i)\cdot \sigma(\eta_i)^2$.

By definition of $L$, for each $1\leq k\leq 3$ there is an $i_k$ and a $K$-embedding $\tau_k: K_{g_{i_k}}\inj L$ such that the corresponding roots $\tau_k(\rho^{(i_k)})$ are pairwise distinct. Let then:
\begin{align*}
\kappa_k &:= \tau_k(\rho^{(i_k)}),
\\\lambda_k &:= \tau_k(\gamma_{i_k}),
\\\mu_k &:= \tau_k(\eta_{i_k}).
\end{align*}
Hence $x - \kappa_k = \lambda_k\cdot \mu_k^2$ as elements of $L$. Hence for $k\neq \ell$ we find that $\kappa_\ell - \kappa_k = \lambda_k\cdot \mu_k^2 - \lambda_\ell\cdot \mu_\ell^2$.

We are done with the $2$-descent on $C_f$. Now it is time for the $3$-descent on $\G_m$. Let $L_{k\ell} := L\left(\sqrt{\lambda_k}, \sqrt{\lambda_\ell}\right)$. Thus $(1) = (\kappa_\ell - \kappa_k) = \left(\sqrt{\lambda_k} \mu_k - \sqrt{\lambda_\ell} \mu_\ell\right)\cdot \left(\sqrt{\lambda_k} \mu_k + \sqrt{\lambda_\ell} \mu_\ell\right)$ as ideals of $\o_{L_{k\ell},S\cup \{\pfrak_k, \pfrak_\ell\}}$ (obvious meaning), the first equality following from $\Delta_f\in \o_{K,S}^\times$.

Let $V_{k\ell}$ be a minimal set of representatives of $\o_{L_{k\ell}, S\cup \{\pfrak_k, \pfrak_\ell\}}^\times / 3$ (that is, modulo cubes). It follows that there are $v_{k\ell,\pm}\in V_{k\ell}, \zeta_{k\ell,\pm}\in \o_{L_{k\ell}, S\cup \{\pfrak_k, \pfrak_\ell\}}^\times$ for which $\sqrt{\lambda_k} \mu_k \pm \sqrt{\lambda_\ell} \mu_\ell = v_{k\ell,\pm}\cdot \zeta_{k\ell,\pm}^3$. Hence:
\begin{align*}
0 &= (\sqrt{\lambda_1} \mu_1\pm \sqrt{\lambda_2} \mu_2) \mp (\sqrt{\lambda_2} \mu_2 \pm \sqrt{\lambda_3} \mu_3) - (\sqrt{\lambda_1} \mu_1 - \sqrt{\lambda_3} \mu_3)
\\&= v_{12,\pm}\cdot \zeta_{12,\pm}^3 \mp v_{23,\pm}\cdot \zeta_{23,\pm}^3 - v_{13,-}\cdot \zeta_{13,-}^3.
\end{align*}
We may of course rearrange this as $\frac{v_{12,\pm}}{v_{13,-}}\cdot \left(\frac{\zeta_{12,\pm}}{\zeta_{13,-}}\right)^3 \mp \frac{v_{23,\pm}}{v_{13,-}}\cdot \left(\frac{\zeta_{23,\pm}}{\zeta_{13,-}}\right)^3 = 1$.

Now for Evertse's bound. Let $F_\pm(X,Y) := \frac{v_{12,\pm}}{v_{13,-}}\cdot X^3\mp \frac{v_{23,\pm}}{v_{13,-}}\cdot Y^3\in \o_{L_{123}}[X,Y]$, with $L_{123} := L(\sqrt{\lambda_1}, \sqrt{\lambda_2}, \sqrt{\lambda_3})$. Then $F_\pm$ is a cubic form with nonzero discriminant. Moreover if $F_\pm(X,Y) = F_\pm(tX,tY) = 1$ then $t^3 = 1$. Therefore by \cite{evertse} the number of solutions of $F_\pm(X,Y) = 1$ with $(X,Y)\in \o_{L_{123}, S\cup \{\pfrak_1, \pfrak_2, \pfrak_3\}}$ is $\ll O(1)^{[L:\Q] + [L : K]\cdot \#|S|}$.

We are done with the argument. Now let us examine the tally. Let us first show that our original point $(x,y)$ can be recovered from the pairs $\left(\frac{\zeta_{12,\pm}}{\zeta_{13,-}}, \frac{\zeta_{23,\pm}}{\zeta_{13,-}}\right)$ up to $O(1)$ many possibilities. To see this first multiply the two $X$-coordinates together to form $\frac{\zeta_{12,+}\cdot \zeta_{12,-}}{\zeta_{13,-}^2}$. Note that:
\begin{align*}
v_{12,+}\cdot v_{12,-}\cdot \zeta_{12,+}^3\cdot \zeta_{12,-}^3 &= (\sqrt{\lambda_1} \mu_1 + \sqrt{\lambda_2} \mu_2)\cdot (\sqrt{\lambda_1} \mu_1 - \sqrt{\lambda_2} \mu_2)
\\&= \lambda_1 \mu_1^2 - \lambda_2 \mu_2^2
\\&= \kappa_2 - \kappa_1.
\end{align*}

Thus it follows that $\zeta_{12,+}^3\cdot \zeta_{12,-}^3 = \frac{\kappa_2 - \kappa_1}{v_{12,+}\cdot v_{12,-}}$. Hence the cube of the product of the two $X$-coordinates is $\zeta_{13,-}^{-6}\cdot \left(\frac{\kappa_2 - \kappa_1}{v_{12,+}\cdot v_{12,-}}\right)$. We note that the term in parentheses is fixed (in terms of our choices up til now). Thus we may recover $\zeta_{13,-}^6$, and hence $\zeta_{13,-}$ up to at most six choices. Having done so we return to the $X$-coordinates of both solutions and recover $\zeta_{12,+}$ and $\zeta_{12,-}$. Then the equality \begin{align*}
2\sqrt{\lambda_1} \mu_1 &= (\sqrt{\lambda_1} \mu_1 + \sqrt{\lambda_2} \mu_2) + (\sqrt{\lambda_1} \mu_1 - \sqrt{\lambda_2} \mu_2)
\\&= (v_{12,+}\cdot \zeta_{12,+}^3) + (v_{12,-}\cdot \zeta_{12,-}^3)
\end{align*}
implies that we can recover $2\sqrt{\lambda_1} \mu_1$. Squaring this we find that we can recover $4\lambda_1 \mu_1^2 = 4(x-\kappa_1)$. Since $\kappa_1$ is fixed we can recover $x$, and then there are at most two choices for $y$ given $x$, so we can indeed recover the point up to $O(1)$ many choices.

So we see that a point $(x,y)\in \mathcal{C}_f^\aff(\o_{K,S})$ is determined up to $O(d^3)$ (arising from the choice of three roots in $L$) many choices by the data $$(\pfrak_1, \pfrak_2, \pfrak_3, \gamma_1, \gamma_2, \gamma_3, v_{12,+}, v_{12,-}, v_{23,+}, v_{23,-}, v_{31,-}).$$ The number of choices for each $\pfrak_i$ is $\#|P^{(i)}| = \#|\Cl(\o_{K_{g_i},S})[2]|$. The number of choices for each $\gamma_i\in U^{(i)}$ is $\#|U^{(i)}|\ll 2^{\deg{g_i}\cdot (\#|S| + [K:\Q])}$ by Dirichlet. Similarly the number of choices for each $v_{k\ell,\pm}\in V_{k\ell}$ is $\#|V_{k\ell}|\ll 3^{4\cdot [L : \Q] + 4\cdot [L : K]\cdot \#|S|}$. Of course $[L:K]\leq d^3$.

Therefore the total number of tuples $(\pfrak_1, \pfrak_2, \pfrak_3, \gamma_1, \gamma_2, \gamma_3, v_{12,+}, v_{12,-}, v_{23,+}, v_{23,-}, v_{31,-})$ is $$\ll \left(\prod_i \#|\Cl(\o_{K_{g_i},S})[2]|\right)\cdot O(1)^{d^3\cdot ([K:\Q] + \#|S|)}.$$ We are done with the second part of the theorem.

So take $g = f$, and let us count the ideal classes of $\bigoplus_i \o_{K_{g_i},S}\simeq \o_{K_f,S}$ that could possibly arise in the $2$-descent step (we have bounded said count by $\leq \prod_i \#|\Cl(\o_{K_{g_i},S})[2]|$ and we claim it is also $\ll 2^{\rank{\Jac\,{(C_f)}(K)}}\cdot O(1)^d$).

Let $J_f := \Jac\,{C_f}$. Let $W\subseteq C_f(\Qbar)$ be the set of Weierstrass points of $C_f$. Let $\infty\in W$ be a point at infinity. We embed $C_f\inj J_f = \Pic^0(C_f)$ via $P\mapsto P - \infty$. Thus as $\Gal(\Qbar/K)$-modules $J_f[2]\simeq \F_2[W - \{\infty\}] / \F_2\cdot \left(\sum_{P\in W - \{\infty\}} P\right)$. In other words, $\Ind_K^{K_f} \F_2\simeq \F_2\oplus J_f[2]$, where we have written $\Ind_K^{K_f}(\bullet) := \bigoplus_i \Ind_K^{K_{g_i}}(\bullet)$. Thus $H^1(K_f, \F_2)\simeq H^1(K, \Ind_K^{K_f} \F_2)\simeq H^1(K, \F_2)\oplus H^1(K, J_f[2])$, where the first isomorphism follows by Shapiro's lemma (and $H^1(K_f, \bullet) := \bigoplus_i H^1(K_{g_i}, \bullet)$). By Kummer it follows that $H^1(K, J_f[2])\simeq (K_f^\times / 2)_{\Nm = \square}$.

Thus by taking invariants of $0\to J_f[2]\to J_f\to J_f\to 0$ we obtain $J_f(K)/2\inj H^1(K, J_f[2])\simeq (K_f^\times/2)_{\Nm = \square}$. Write $G\subseteq H^1(K, J_f[2])$ for the image of this map. Note that the restriction of this map to $C_f(K) - W$ is simply $(x,y)\mapsto x-\rho$, so similarly write $G'\subseteq G$ for the image of $\mathcal{C}^\aff_f(\o_{K,S}) - W$.

We have already seen that each $g\in G'\subseteq G\inj (K_f^\times/2)_{\Nm = \square}$ gives rise to a class represented by an $\alpha\in K_f^\times$ for which $v_\pfrak(\alpha)$ is even for all primes $\pfrak\subseteq \o_{K_f,S}$ (because $\Delta_f\in \o_{K,S}^\times$). From such a class we may produce an element of $\Cl(\o_{K_f,S})[2]$ via $\alpha\mapsto \afrak$ such that $(\alpha) = \afrak^2$ as ideals of $\o_{K_f,S}$.

The corresponding map $\mathcal{C}^\aff_f(\o_{K,S}) - W\to \Cl(\o_{K_f,S})[2]$ is precisely (up to our choice of representatives) our map $(x,y)\mapsto (\pfrak_i)_i$. It therefore suffices to show that $\#|G'|\ll 2^{\rank{J_f(K)}}\cdot O(1)^d$. However $\#|G| = \#|(J_f[2])(K)|\cdot 2^{\rank{J_f(K)}}$ and $G'\subseteq G$.
\end{proof}

\renewcommand{\refname}{References.}

\bibliographystyle{amsplain}

\bibliography{noteonatheoremofprofessorx}

\end{document}